\numberwithin{equation}{section}
\newtheorem{cor}[equation]{Corollary}
\newtheorem{lem}[equation]{Lemma}
\newtheorem{prop}[equation]{Proposition}
\newtheorem{thm}[equation]{Theorem}
\newtheorem{Example}[equation]{Example}
\newenvironment{ex}{\begin{Example}\rm}{\end{Example}}
\newtheorem{remark}[equation]{Remark}
\newenvironment{rmk}{\begin{remark}\rm}{\end{remark}}
\def\co{\colon\thinspace}
\def\t{\mathfrak{t}}
\def\F{\mathbb{F}}
\def\a{\alpha}
\def\d{\partial}
\def\s{\sigma}
\def\Z{\mathbb{Z}}
\def\Z{\mathbb{Z}}
\def\S1{\bf S^1}
\def\equalsfill{$\m@th\mathord=\mkern-7mu
\cleaders\hbox{$\!\mathord=\!$}\hfill
\mkern-7mu\mathord=$}
\begin{document}

\abovedisplayskip=6pt plus3pt minus3pt
\belowdisplayskip=6pt plus3pt minus3pt

\title[Four manifolds with no smooth spines]
{\bf Four manifolds with no smooth spines}

\keywords{spine, knot, homology cobordism, Heegaard-Floer, negative curvature.}
\thanks{\rm 2020 MSC\ Primary 57K40, 
Secondary 20F67, 53C20, 57K18, 57Q35, 57Q40.}
\thanks{Belegradek was partially supported by the Simons Foundation grant 524838.}

\author{Igor Belegradek}
\address{Igor Belegradek\\ School of Mathematics\\ Georgia Tech\\ Atlanta, GA, USA 30332\vspace*{-0.05in}}
\email{ib@math.gatech.edu}
\urladdr{www.math.gatech.edu/$\sim$ib}

\author{Beibei Liu}
\address{Beibei Liu\\ School of Mathematics\\ Georgia Tech\\ Atlanta, GA, USA 30332\vspace*{-0.05in}}
\email{bliu96@gatech.edu}
\urladdr{sites.google.com/view/beibei-liu/home/}

\begin{abstract}
Let $W$ be a compact smooth orientable $4$-manifold that deformation
retract to a {\scshape pl} embedded closed surface.
One can arrange the embedding to have at most 
one non-locally-flat point, and
near the point the topology of the embedding is encoded in the 
singularity knot $K$. If $K$ is slice, then $W$ 
has a smooth spine, i.e.,
deformation retracts onto a smoothly embedded surface.
Using the obstructions from the Heegaard Floer homology 
and the high-dimensional surgery theory,
we show that $W$ has no smooth spine if $K$
is a knot with nonzero Arf invariant, a nontrivial 
L-space knot, the connected sum of nontrivial L-space knots, or 
an alternating knot of signature $<-4$.
We also discuss examples where the interior of $W$ 
is negatively curved.
\end{abstract}
\maketitle
\thispagestyle{empty}

\vspace{-17pt}

\section{Introduction}

A {\em spine\,} is a topological
(not necessarily locally flat), compact, boundaryless 
submanifold that is a strong deformation retract of the ambient manifold.
A spine is {\em smooth or {\scshape pl}\,} 
if the submanifold has this property.

Examples of $4$-manifolds that are homotopy equivalent to closed
sufaces but have no {\scshape pl} spines can be found 
in~\cite{Mat75, MatVen, LevLid, HP}. It is shown in~\cite{Ven} that an example
in~\cite{MatVen} does not even have a topological spine.
Some $4$-manifolds with topological spines 
and no {\scshape pl} spines can be found in~\cite{RubKim}.
The present paper constructs $4$-manifolds with 
{\scshape pl} spines and no smooth spines.

In this section $W$ denotes a compact oriented 
smooth $4$-manifold with a {\scshape pl} spine $S$ 
homeomorphic to a closed oriented connected surface.
By a standard argument $S$ can be moved by
a {\scshape pl} homeomorphism to a spine 
with at most one non-locally-flat point; henceforth 
we assume that $S$ has this property. If $S$ is locally flat, 
then the submanifold $S$ is smoothable~\cite[Corollary 6.8]{RS-I}.
Otherwise, $S$ intersects the link of the non-locally-flat point in a
{\em singularity knot} $K$. If $K$ is smoothly slice, then replacing
the cone on $K$ in $S$ with a smoothly embedded disk in $W$ gives
a smooth spine of $W$.

Conversely,  if $\Sigma$ is
an oriented connected surface with one
boundary component, then attaching $\Sigma\times D^2$ to
the $4$-ball along the knot $K$ in its boundary with framing $r$
gives a compact oriented $4$-manifold
with a {\scshape pl} spine homeomorphic to $\Sigma/\d \Sigma$,
which has normal Euler number $r$ and singularity knot $K$.
If $\Sigma=D^2$, the $4$-manifold is denoted by $K^r$ and called a {\em knot trace}.

Examples of non-slice singularity knot such that $W$ has a smooth spine
come from exotic knot traces.
Namely~\cite[Theorem A]{Akb} describes 
knots $K_1$, $K_2$ such that $K_1$ is slice, $K_2$ is not slice,
and $K_1^r$, $K_2^r$ are diffeomorphic for some $r$.
We refer to~\cite{HMP, HP, FMNOPR} for a recent study of relations between
invariants of knot traces and knot concordance.

Cappell and Shaneson~\cite{CS76} developed a surgery-theoretic criterion
that can help decide when a manifold with a {\scshape pl} spine 
of dimension $\ge 3$ and codimension $2$ also admits a locally 
flat spine. Applying the criterion to $W\times S^1$ we prove

\begin{thm}
\label{thm: arf CS}
If $W$ is a compact oriented 
smooth $4$-manifold that has a {\scshape pl} spine
whose singularity knot has nonzero Arf invariant, then 
$W$ contains no smooth spine.
\end{thm}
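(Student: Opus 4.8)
The plan is to argue by contradiction using the Cappell--Shaneson criterion, after crossing everything with a circle to raise the dimension of the spine to the value $\ge 3$ required by~\cite{CS76}. Suppose $W$ had a smooth spine $S'$. As a deformation retract of the connected manifold $W$, the closed surface $S'$ is homotopy equivalent to $S$; since closed surfaces are classified up to homeomorphism by their homotopy type, $S'$ is homeomorphic to $S$, in particular an orientable surface of the same genus. Multiplying a deformation retraction of $W$ onto $S'$ by $\mathrm{id}_{S^1}$ shows that the compact oriented $5$-manifold $W\times S^1$ deformation retracts onto the smoothly---hence locally flatly---embedded closed $3$-manifold $S'\times S^1$; thus $W\times S^1$ admits a locally flat spine.

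On the other hand, $W\times S^1$ carries the {\scshape pl} spine $P:=S\times S^1$. Near the unique non-locally-flat point $p$ of $S$ the pair $(W,S)$ is the cone on $(S^3,K)$, so near the circle $C:=\{p\}\times S^1$ the pair $(W\times S^1,P)$ is homeomorphic to $S^1\times\big(D^4,\,c(K)\big)$, where $c(K)$ is the cone on $K$. Hence $P$ is a closed, $3$-dimensional, codimension-two {\scshape pl} spine of $W\times S^1$ whose non-locally-flat set is the circle $C$, along which the transverse knot type is exactly $K$; and since $c(K)$ is homologically unknotted in $D^4$ (its complement has the homology of a circle), the embedding $P\subset W\times S^1$ is homologically locally flat, so the machinery of~\cite{CS76} applies to it.

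Applying the criterion of~\cite{CS76} to $P\subset W\times S^1$ produces an obstruction $\theta(K)$, lying in a surgery-theoretic ($L$- or $\Gamma$-)group built from $\Z[\pi_1(W\times S^1)]=\Z[\pi_1(W)\times\Z]$ and depending only on the singularity knot $K$, whose non-vanishing obstructs the existence of a locally flat spine of $W\times S^1$. By the first paragraph such a spine exists, so $\theta(K)=0$.

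It remains---and this is the main step---to show that $\theta(K)\ne 0$ whenever $\Arf(K)\ne 0$. The mechanism is that the $S^1$-coordinate induces a Shaneson-type splitting of the ambient obstruction group, one direct summand of which is the analogous group for $\Z[\pi_1(W)]$ in one lower degree; composing further with the augmentation $\Z[\pi_1(W)]\to\Z$ lands in $L_2(\Z)\cong\Z/2$, and under this composite $\theta(K)$ is carried to the Kervaire--Arf invariant of the Seifert quadratic form of $K$, namely $\Arf(K)$. As this $\Z/2$ is a genuine direct summand, $\theta(K)=0$ forces $\Arf(K)=0$, contradicting the hypothesis; hence $W$ has no smooth spine. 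The technical heart is exactly this last identification: one must unravel the Cappell--Shaneson obstruction of a cone-on-$K$ singularity, observe that the surgery problem localised along the singular circle $C$ becomes, after the degree shift coming from the $S^1$-factor, the classical Kervaire problem for a Seifert surface of $K$, and check that its $\Z/2$-contribution survives in $\theta(K)$ rather than being absorbed by the rest.
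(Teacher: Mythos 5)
Your high-level strategy is exactly the paper's: argue by contradiction, cross everything with $S^1$ to raise the spine to dimension $3$, and invoke the Cappell--Shaneson criterion. However, the place you explicitly flag as the ``technical heart'' --- the identification of the obstruction with $\Arf(K)$ --- is precisely the content of the paper's Lemma in Section~3, which it proves carefully and which your outline only gestures at. Without that identification the proof is incomplete, so this is a genuine gap rather than a stylistic difference.

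The paper also takes a different and cleaner route to the identification. It never passes to group-ring $L$- or $\Gamma$-groups of $\Z[\pi_1(W)\times\Z]$ and never invokes a Shaneson splitting. Instead, it works entirely in $[S,G/PL]\cong H^2(S;\Z_2)\cong\Z_2$ \emph{before} crossing with $S^1$: it constructs an explicit homology equivalence $h\co W\to D_\xi$ adapted to the single cone singularity, makes $h$ transverse to the zero section so that $h^{-1}(\text{zero section})\cap B$ is a Seifert surface $\Sigma$ of $K$, uses additivity of the surgery obstruction to reduce the Kervaire invariant of $(h\vert_N,b_N)$ to that of $(h\vert_\Sigma,b_N\vert_\Sigma)$, and then cites Levine to identify the latter with $\Arf(K)$. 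The passage to $W\times S^1$ is handled by a one-line pullback argument: since $p\circ i=\mathrm{id}_S$, nontriviality of the normal invariant on $S$ forces nontriviality of the pulled-back normal invariant on $S\times S^1$. Your plan of first crossing with $S^1$ and then trying to extract $\Arf(K)$ from a $\Gamma$-group after Shaneson splitting adds an extra layer (the $\Z$-factor in $\pi_1$) that the paper deliberately avoids; moreover, CS76's Theorem~6.2 is not a single obstruction $\theta(K)$ in one group but a two-stage criterion whose primary piece is a normal-invariant comparison in $[S\times S^1,G/PL]$, not an $L$-group element. In the paper's argument the contradiction is already reached at the primary stage: the normal invariant of the given Poincar\'e embedding is nonzero (by the Lemma plus pullback), while the normal invariant of the diffeomorphism $g\co L'\to S'$ induced by the putative locally flat spine is zero, so Theorem~6.2 of~\cite{CS76} rules out the locally flat realization that $L'$ manifestly provides.

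To close the gap you would need either to prove the Shaneson/augmentation calculation you sketch, or, more in line with the paper, to prove the Lemma directly: construct the normal map $h$, show via transversality that its restriction over the singular ball produces a Seifert surface of $K$, apply additivity of the surgery obstruction, and invoke the Levine/Rourke--Sullivan identification of the Kervaire invariant with $\Arf(K)$.
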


Tye Lidman and Daniel Ruberman asked us if the generalized 
Rokhlin invariant can be used to give 
a purely $4$-dimensional proof of Theorem~\ref{thm: arf CS}.
This was done in~\cite[Theorem 3.1]{Sae} in the case when
$W$ is a homotopy $S^2$ with finite $H_1(\d W)$.
We leave the question to an interested reader. 

The criterion of~\cite{CS76} also 
gives a weak converse of Theorem~\ref{thm: arf CS}:
If $K$ has zero Arf invariant, then $W\times S^1$ has a smooth spine,
see Remark~\ref{rmk: arf reverse}.

If $W$ has two {\scshape pl} spines with regular neighborhoods
$R_1$, $R_2$ in the interior of $W$, then there is a homology cobordism 
between the boundaries $\d R_1$, $\d R_2$
obtained by gluing $W\setminus\mathrm{Int}(R_1)$ and
$W\setminus\mathrm{Int}(R_2)$ along $\d W$.
The Heegaard-Floer $d$-invariants $d_{\text{top}}$, $d_{\text{bot}}$
are preserved under homology cobordisms. Furthermore, one can express
the $d$-invariants of $\d R_1$, $\d R_2$ via singularity knots
of their spines, and for some knots the $d$-invariants
can be explicitly computed, which gives the following.
 
\begin{thm}
\label{thm: sing knots no smooth spine}
If $W$ is a compact oriented 
smooth $4$-manifold that has a {\scshape pl} spine
whose singularity knot is a nontrivial L-space knot, 
the nontrivial connected sum of nontrivial L-space knots, or an alternating knot of
signature $<-4$, then $W$ contains no smooth spine. 
\end{thm}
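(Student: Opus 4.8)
The plan is to argue by contradiction. If $W$ carried a smooth spine $S'$, it would carry two {\scshape pl} spines: the given $S$, with singularity knot $K$, and the smooth one $S'$ (smoothly embedded $\Rightarrow$ {\scshape pl} and locally flat, hence with no non-locally-flat point), and I would derive a contradiction by comparing the Heegaard Floer $d$-invariants of the boundaries of their regular neighborhoods. First I would record the elementary bookkeeping: $S$ and $S'$ are both homotopy equivalent to $W$, hence closed orientable surfaces of the same genus $g$; and $[S']=\pm[S]$ in $H_2(W)\cong\Z$, so the normal Euler number of $S'$ equals $[S']^2=[S]^2=r$, the normal Euler number of $S$. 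Choosing regular neighborhoods $R\supset S$, $R'\supset S'$ in $\Int W$, the argument recalled in the Introduction shows that gluing $W\setminus\Int R$ and $W\setminus\Int R'$ along $\partial W$ yields a homology cobordism $C$ from $Y:=\partial R$ to $Y':=\partial R'$, carrying a spin$^c$ structure that matches the torsion spin$^c$ structures on the two ends.

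Next I would identify the two boundaries. Since $S'$ is smoothly embedded, $R'$ is its normal $D^2$-bundle, so $Y'$ is the $S^1$-bundle over $\Sigma_g$ with Euler number $r$, whose $d$-invariants are known. On the other side, unwinding the model of the Introduction shows $R$ is {\scshape pl}-homeomorphic to the manifold obtained by attaching $\Sigma_{g,1}\times D^2$ to $B^4$ along a tubular neighborhood of $K$ in $\partial B^4=S^3$ with framing $r$, whence
\[
Y \;=\; \bigl(S^3\setminus\Int\nu(K)\bigr)\ \cup_{T^2}\ \bigl(\Sigma_{g,1}\times S^1\bigr),
\]
the exterior of $K$ glued to $\Sigma_{g,1}\times S^1$ along its boundary torus, meridian of $K$ to the $S^1$-fiber and $r$-framed longitude to $\partial\Sigma_{g,1}\times\mathrm{pt}$. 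Equivalently, $Y$ is obtained from $Y'$ by excising a solid torus and regluing the exterior of $K$; for $g=0$ this says precisely $Y=S^3_r(K)$, $Y'=L(r,1)$.

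The core of the proof is to express $d_{\mathrm{top}}(Y),d_{\mathrm{bot}}(Y)$ through those of $Y'$ and the Heegaard Floer invariants $V_i(K),V_i(\overline K)$. For this I would feed the description above into the Ozsv\'ath--Szab\'o mapping-cone/large-surgery formula and its extension to surgeries inside manifolds with $b_1>0$ (or reduce by handle slides to a knot in $S^3$), obtaining identities of the form
\[
d_{\mathrm{top}}(Y)=d_{\mathrm{top}}(Y')-2\,V_a(K),\qquad d_{\mathrm{bot}}(Y)=d_{\mathrm{bot}}(Y')-2\,V_b(\overline K)
\]
for nonnegative integers $a,b$ depending only on $r$ and $g$ (with $K$ and $\overline K$ exchanged when $r<0$). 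Since $C$ preserves $d_{\mathrm{top}}$ and $d_{\mathrm{bot}}$, the proof finishes once one of these correction terms is nonzero, and for the three families this is a known $CFK^\infty$ computation: a nontrivial L-space knot has $V_0\ge1$; a nontrivial connected sum of nontrivial L-space knots has $V_0\ge1$, as read off from the tensor product of the corresponding staircase complexes; and for an alternating knot $K$ with $\sigma(K)<-4$, thinness of $\widehat{HFK}(K)$ gives $V_i(K)=\bigl\lceil\tfrac12\max\{0,\,-\sigma(K)/2-i\}\bigr\rceil$, so $V_0(K)\ge2$ and, more to the point, $V_2(K)\ge1$---exactly enough to kill every framing $r$, the binding case being the small negative framings (and their interaction with the genus when $g\ge1$).

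The step I expect to be the real obstacle is the middle one: pinning down $Y$ and especially producing a clean formula for $d_{\mathrm{top}},d_{\mathrm{bot}}$ of $Y$ in the general ($b_1>0$, arbitrary genus) setting, with honest control of which $V_i$ enters for which pair $(r,g)$---in particular confirming that for alternating singularity knots no framing demands more than $V_2(K)>0$, which is what fixes the signature threshold at $<-4$. The $d$-invariant inputs for the three knot families are then essentially citations to standard $CFK^\infty$ calculations.
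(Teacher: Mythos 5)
Your overall strategy is the right one and matches the paper's: compare the $d$-invariants $d_{\mathrm{top}},d_{\mathrm{bot}}$ of $\partial R$ and $\partial R'$ across the homology cobordism, use a mapping-cone computation to express them via the $V$-function of $K$, and then show the $V$-function of $K$ violates the resulting constraint. The paper does exactly this: $\partial R=C^g_e(K\#B)$ and $\partial R'=C^g_e(B)$ where $B$ is the Borromean knot, and Theorem~\ref{thm:surgeryfor} computes both $d$-invariants in terms of $\min_{a=0,\ldots,g}\{a+V_{k-g+2a}(K)\}$. However, there is a genuine gap in the step where you dispatch L-space knots.

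The correct constraint coming from the homology cobordism is not simply ``$V_0(K)\ge 1$ is forbidden.'' Because the surgery formula involves a minimum over $a=0,\ldots,g$, the obstruction depends on the parity of the spine genus $g$: the paper's Corollary~\ref{cor: 0 or 1} says that a smooth spine forces $V_0(K)=0$ when $g$ is even, but only $V_1(K)=0$ when $g$ is odd. Now take $g$ odd and $K=T(2,3)$, the right-handed trefoil. Then $g(K)=1$, so $V_1(K)=0$ and the $d$-invariant obstruction is silent; your claim that ``a nontrivial L-space knot has $V_0\ge1$'' is true but is the wrong inequality here. The paper closes this gap by a completely different argument: $V_0(K)=0$ or $V_1(K)=0$ forces $g(K)\le 1$, so a nontrivial L-space knot must be $T(2,3)$, and then Theorem~\ref{thm: arf CS} (the Arf/Cappell--Shaneson obstruction) applies because $\Arf(T(2,3))\ne 0$. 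Without this fallback your proof does not cover odd-genus spines with trefoil singularity. The connected-sum case has the analogous issue: you only assert $V_0\ge 1$, whereas the paper must and does prove both $V_0(K)\ge 1$ and $V_1(K)\ge 1$ (using the Borodzik--Livingston $R$-functions) precisely to cover both parities of $g$. You flag the genus interaction as a potential obstacle at the end, and indeed that is exactly where the missing idea lives.
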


Recall that a knot $K\subset S^{3}$ is an \emph{L-space knot} if there is an integer $n>0$ such that the $n$-framed surgery on the knot is an L-space \cite[Definition 1.1]{OS05}.
For example, torus knots are L-space knots~\cite[p.1285]{OS05}.

The $d$-invariants obstruction applies to 
some topologically slice knots in~\cite{HKL}, which gives

\begin{cor}
\label{cor: top flat}
For any $g, e\in\Z$ with $g\ge 0$ there exists a compact smooth oriented
\mbox{${\textup 4}$-manifold}
with no smooth spine and 
a topological locally flat spine that is an oriented closed
genus $g$ surface with normal Euler number $e$.
\end{cor}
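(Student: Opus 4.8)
The plan is to run the construction of the introduction with a \emph{topologically slice} singularity knot taken from \cite{HKL}: capping the cone point of the {\scshape pl} spine with a locally flat slice disk then produces a topological locally flat spine, while the $d$-invariant obstruction behind Theorem~\ref{thm: sing knots no smooth spine} continues to forbid a smooth spine.

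First I would fix $g\ge 0$ and $e\in\Z$, let $\Sigma$ be a compact oriented genus-$g$ surface with one boundary circle, and let $K\subset S^{3}$ be a nontrivial topologically slice knot, to be specified later. Following the introduction, set $W=B^{4}\cup(\Sigma\times D^{2})$, attaching $\Sigma\times D^{2}$ along a tubular neighborhood of $K\subset\d B^{4}$ with framing $e$; for $g=0$ this is the knot trace $K^{e}$. Then $W$ is a compact oriented smooth $4$-manifold that deformation retracts onto a {\scshape pl} spine $S\cong\Sigma/\d\Sigma$, a closed oriented genus-$g$ surface with normal Euler number $e$ and singularity knot $K$. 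Since $K$ is topologically slice it bounds a topological locally flat disk $\Delta\subset B^{4}$; replacing the cone on $K$ inside $S$ by $\Delta$, exactly as in the smooth slice case recalled in the introduction, produces a topological locally flat spine $S'$ of $W$ that is again a closed oriented genus-$g$ surface, with the same normal Euler number $e$ because the modification is supported near the cone point and does not change the spine's homology class in $H_{2}(W)$. I would stress that $S'$ is in general neither {\scshape pl} nor smooth, so there is no conflict with \cite[Corollary 6.8]{RS-I}, which smooths locally flat {\scshape pl} spines.

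Next I would show that, for a suitable $K$, the manifold $W$ has no smooth spine. Suppose it deformation retracts onto a smoothly embedded closed surface. Since $W$ is homotopy equivalent to the closed genus-$g$ surface, this surface has genus $g$ and its class generates $H_{2}(W)$, which forces its normal Euler number to be $e$; hence the boundary of one of its smooth regular neighborhoods is the circle bundle $Y$ of Euler number $e$ over the closed genus-$g$ surface --- equivalently, what the present construction gives when $K$ is replaced by the unknot. As in the discussion preceding Theorem~\ref{thm: sing knots no smooth spine}, gluing the relevant complements along $\d W$ exhibits a homology cobordism from $\d W$ to $Y$, so $d_{\text{top}}(\d W)=d_{\text{top}}(Y)$ and $d_{\text{bot}}(\d W)=d_{\text{bot}}(Y)$. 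On the other hand, the argument that proves Theorem~\ref{thm: sing knots no smooth spine}, which treats spines of arbitrary genus, expresses these invariants of $\d W$ through $K$ and $e$ by known surgery formulas (the extra genus entering $\d W$ and $Y$ in the same way, so that the comparison reduces to the knot-trace case). Since \cite{HKL} supplies topologically slice knots to which this obstruction applies --- e.g.\ with some $V_{j}(K)\neq 0$ --- choosing such a $K$ appropriately for the given $e$, or replacing it by its mirror, makes the equality of $d$-invariants fail, a contradiction.

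The hard part is this last input: for every $e\in\Z$ one must exhibit a topologically slice knot from \cite{HKL} whose $d$-invariant of the relevant surgery genuinely obstructs a homology cobordism to $Y$. That is a concrete Heegaard Floer computation rather than a formal argument, and it is exactly the content imported from \cite{HKL}; everything else is the construction above together with the obstruction machinery already set up for Theorem~\ref{thm: sing knots no smooth spine}.
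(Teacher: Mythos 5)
Your plan coincides with the paper's: run the construction of the introduction with a topologically slice knot from \cite{HKL}, cap the cone point with a locally flat slice disk to obtain a topological locally flat spine, and rule out a smooth spine via the homology-cobordism/$d$-invariant obstruction already set up in Sections~\ref{sec: knots and d-invariants}--\ref{sec: sing knots and spines}. The construction of $W$, the identification of $\partial W$ with an $e$-surgery on $K\#B$, the existence of a locally flat spine, and the homology cobordism to the circle bundle $Y$ are all correct.

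The gap is in the final step. Stating that one needs a topologically slice knot with ``some $V_j(K)\ne 0$'' is not the right condition: $V_j(U)=-j>0$ for $j<0$, and even restricting to $j\ge 0$ (which, by Proposition~\ref{Vproperty} and nonnegativity, is equivalent to $V_0(K)\ge 1$) is insufficient when $g$ is odd, since then Corollary~\ref{cor: 0 or 1} only forces $V_1(K)=0$, which is compatible with $V_0(K)=1$. The precise constraint that a smooth spine imposes, and the one the paper's proof uses, is extracted by combining Corollary~\ref{cor: 0 or 1} with Proposition~\ref{Vproperty}: a smooth spine forces $V_0(K)\in\{0,1\}$. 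Your gloss that ``the extra genus enters $\d W$ and $Y$ in the same way, so the comparison reduces to the knot-trace case'' also elides a real step: the surgery formula of Theorem~\ref{thm:surgeryfor} gives $\min_{a=0,\ldots,g}\{a+V_{k-g+2a}(K)\}=\lceil g/2\rceil$, and extracting $V_0(K)=0$ (or $V_1(K)=0$) from this requires the monotonicity $V_{s-1}(K)-V_s(K)\in\{0,1\}$, not merely a cancellation of $g$. Once you have the clean constraint $V_0(K)\in\{0,1\}$, the rest is a one-liner: \cite{HKL} furnishes topologically slice knots $K_m$ with $V_0(K_m)=m$ for every $m\ge 2$, and this works uniformly for all $g$ and all $e$ (with the mirror trick handling $e<0$, as you note). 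So your overall strategy is right, but you have deferred precisely the pivot of the proof --- namely what follows from Corollary~\ref{cor: 0 or 1} --- rather than carrying it out.
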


We were led to the subject of this paper while thinking of examples in~\cite{GLT, Kui} 
of oriented hyperbolic $4$-manifolds with {\scshape pl} spines.
Each of these manifolds is a quotient of the hyperbolic space 
$\mathbb H^4$ by a Kleinian group $\Gamma_0$ which is a torsion-free
finite index subgroup in a certain discrete group 
$\Gamma$ of orientation preserving isometries of
$\mathbb H^4$. The group $\Gamma$ is described via face-pairings of
its fundamental domain $F$, which
is obtained by removing from $\mathbb H^4$ a neighborhood of a nontrivial torus knot $T$
in the ideal boundary of $\mathbb H^4$.
In turn, the fundamental domain $F_0$ 
for $\Gamma_0$ is obtained by gluing $k$
copies of $F$, where $k$ is the index of $\Gamma_0$ in $\Gamma$, and one can describe
$F_0$ as the result of removing from $\mathbb H^4$ 
a neighborhood of the $k$-fold connected sum of $T$.
This $k$-fold connected sum is the singularity knot in a {\scshape pl} spine 
of $\mathbb H^4/\Gamma_0$, and hence $\mathbb H^4/\Gamma_0$ has no smooth spine
by Theorem~\ref{thm: sing knots no smooth spine}.

A related construction in~\cite[Section 6]{GLT} replaces the torus knot $T$ by an 
arbitrary nontrivial knot $K$ but the group $\Gamma$ is now generated by reflections
in the codimension one faces of $F$. The resulting singularity knot of the
{\scshape pl} spine of $\mathbb H^4/\Gamma_0$ is
the $\frac{k}{2}$-fold connected sum of $K\# r\bar K$,
where $r\bar K$ is the reverse of the mirror image of $K$.
(Here $k$ is even because $\mathbb H^4/\Gamma_0$ is orientable
and $\Gamma$ does not preserve orientation).
Since $K\# r\bar K$ is slice, the singularity knot is slice, 
and $\mathbb H^4/\Gamma_0$ has a smooth spine. 

An analog of these examples with variable pinched negative curvature is discussed
in~\cite{Bel-rn}, which is based on Ontaneda's Riemannian 
hyperbolization~\cite{Ont-ihes}. Here there is no need to pass to
a finite index torsion-free subgroup, and for any knot $K$
one gets pinched negatively curved
$4$-manifolds whose {\scshape pl} spine has $K$ as a singularity knot.
In particular, if $K$ satisfies the assumptions of Theorems~\ref{thm: arf CS} or 
\ref{thm: sing knots no smooth spine}, the negatively pinched $4$-manifold
has no smooth spine, while in the setting of 
Corollary~\ref{cor: top flat} there exists a topologically flat spine.
 
The structure of the paper is as follows. 
In Section~\ref{sec: kervaire} we review results on the Kervaire invariant
of compact oriented manifolds with codimension $2$ spines.
In Section~\ref{sec: kervaire and arf} we specialize to dimension $4$, relate
the Kervaire invariant of $W$ and the Arf invariant of the singularity knot, 
and prove Theorem~\ref{thm: arf CS}.
Section~\ref{sec: HF d-invariants} is a review of
Heegaard Floer $d$-invariants, whose relationship to $V$-functions
is explored in Section~\ref{sec: knots and d-invariants}.
In Section~\ref{sec: spines homology cobordisms}
we investigate how the assumption ``$W$ has a smooth spine''
affects the $V$-function of the singularity knot.
Section~\ref{sec: sing knots and spines} contains a proof of 
Theorem~\ref{thm: sing knots no smooth spine} and
Corollary~\ref{cor: top flat}.

{\bf Acknowledgments}: We are grateful to Lisa Piccirillo for leading us to~\cite{Akb}
and to Kyle Hayden for expository suggestions.
Liu appreciates the support and hospitality 
of the Max Planck Institute for Mathematics in Bonn, where she was a member when 
this work began. 

\section{Kervaire invariant of codimension two thickenings}
\label{sec: kervaire}

Let $W$ be a compact oriented {\scshape pl} manifold with a 
{\scshape pl} embedded spine $S$, a closed
connected oriented manifold of $\dim(S)=\dim(W)-2$.
Let $\xi$ be an oriented plane bundle over $S$ whose Euler class
is the normal Euler class of $S$ in $W$, and let 
$p_\xi\co D_\xi\to S$ be the associated $2$-disk bundle. 
Then~\cite[Proposition 1.6]{CS76} 
gives a homology isomorphism $h\co (W, \d W)\to (D_\xi, \d D_\xi)$
such that $h$ preserves the orientation class in the relative
second cohomology, and $p_\xi\circ h\vert_S$ is homotopic to the identity of $S$.
The map $h$ pulls $\a:=p_\xi^*(\nu_{\!_W}\vert_S)$ to 
the stable normal bundle $\nu_{\!_W}$ of $W$ because
$h^*\a$ and $\nu_{\!_W}$ are isomorphic over $S$ to which
$W$ deformation retracts. This gives a normal map $(h, b_W)$
where $b_W\co \nu_{\!_W}\to \a$ is the above bundle map.

Assuming, as we can, that $h$ is transverse regular to the zero section $S$
of $D_\xi$, we see that $N:=h^{-1}(S)$ is a closed surface which is 
locally flat in $W$ with normal bundle $h^*\xi$. 
The stable normal bundle to $N$ is 
$\nu_N=\nu_{\!_W}\vert_S\oplus h^*\xi=h^*(\a\vert_S\oplus\xi)$.
Thus $h\vert_N\co N\to S$ is covered by the bundle map 
$b_N\co\nu_N\to\a\vert_S\oplus\xi$.
The orientation on $\xi$ and $W$ defines an orientation on $N$ for which
$h\vert_N\co N\to S$ has degree one, and hence $(h\vert_N, b_N)$
is a normal map. 

The normal invariant of $(h\vert_N, b_N)$
is the image of the normal invariant of $(h, b_W)$
under the inclusion-induced map $[W, G/PL]\to [S, G/PL]$, which
is a bijection because $S\hookrightarrow W$ is a homotopy equivalence. 
This standard fact is stated on~\cite[p.195]{CS76} 
and in the appendix of~\cite{KimRub}, and 
the proof amounts to comparing various definitions
of the normal invariant.

By~\cite[Section 1]{RouSul} the Kervaire invariant of the normal map $(h\vert_N, b_N)$
is the Arf invariant of a certain quadratic form on 
the kernel of $h\vert_{N*}\co H_1(N;\Z_2)\to H_1(S; \Z_2)$.
A normal map with nontrivial Kervaire invariant represents a nontrivial class
in $[S, G/PL]$, see~\cite[Theorem 1.4(ii)]{RouSul}, and in fact,
the Kervaire invariant defines a group homomorphism
$[S, G/PL]\to\Z_2$~\cite[Corollary 4.5]{RouSul}. 

\section{Kervaire and Arf invariants in dimension four}
\label{sec: kervaire and arf}

Let us adopt notations of Section~\ref{sec: kervaire} and suppose $\dim(W)=4$.
Then the group $[S, G/PL]$ is isomorphic to $H^2(S;\Z_2)\cong\Z_2$, 
see e.g.~\cite[Section 2]{KirTay}, and hence the 
Kervaire invariant defines an isomorphism $[S, G/PL]\to\Z_2$.
 
Fix a triangulation of $W$ for which $S$ is a full subcomplex with only 
one non-locally flat point. Its star is an embedded 
$4$-ball $B$, and $C:=S\cap B$ is the cone on the knot $K=S\cap\d B$, 
the singularity knot of $S\subset W$. 

\begin{lem}
The Kervaire invariant of $W$ in 
$[S, G/PL]$ is the Arf invariant of the knot $K$.
\end{lem}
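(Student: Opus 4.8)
The plan is to reduce the statement, via the surface $N$ and the normal map $(h|_N,b_N)$ of Section~\ref{sec: kervaire}, to the classical four-dimensional description of the Arf invariant of a knot. As explained in Section~\ref{sec: kervaire}, the Kervaire invariant of $W$ \emph{is} the Kervaire invariant of $(h|_N,b_N)$: the normal invariant of $(h|_N,b_N)$ is the image of that of $(h,b_W)$ under the bijection $[W,G/PL]\to[S,G/PL]$, and the Kervaire invariant is a homomorphism on $[S,G/PL]$. By~\cite[Section 1]{RouSul} this invariant equals $\Arf(q)$, where $q$ is the Rourke--Sullivan quadratic refinement of the $\Z_2$-intersection form on $V:=\ker\bigl(h|_{N*}\colon H_1(N;\Z_2)\to H_1(S;\Z_2)\bigr)$; so it suffices to compute $\Arf(q)$.

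Next I would choose $h$ so that over $S\setminus\Int(C)$ it is the identification of a normal $2$-disk bundle neighborhood $U$ of $S\setminus\Int(C)$ in $W$ with the corresponding part of $D_\xi$; this is possible since $S$ is locally flat there. Put $W':=W\setminus\Int(U)$, so $h$ carries $W'$ into $D_\xi|_C$, and a Mayer--Vietoris argument shows $h|_{W'}\colon W'\to D_\xi|_C$ is a homology isomorphism; hence $W'$ is a homology $4$-ball with boundary a homology $3$-sphere. Taking $h$ transverse to the zero section, $N=A\cup_K F$, where $A:=N\cap U$ is a copy of $S\setminus\Int(C)$ (a surface with one boundary circle) and $F:=N\cap W'$ is a compact oriented surface properly embedded in $W'$ with $\partial F=K$, which we may assume connected. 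Since $h|_F$ factors through the contractible cone $C$, the map $h|_{N*}$ kills $H_1(F;\Z_2)$, while on $H_1(A;\Z_2)$ it is an isomorphism onto $H_1(S;\Z_2)$. As $A$ and $F$ meet only along $K$, Mayer--Vietoris yields an orthogonal direct sum $H_1(N;\Z_2)=H_1(A;\Z_2)\oplus H_1(F;\Z_2)$ for the intersection form, each summand nonsingular; therefore $V=H_1(F;\Z_2)$ and $q$ refines the nonsingular alternating intersection form of the surface $F$.

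Finally I would identify $q$ with the reduction mod $2$ of the Seifert form of $K$ determined by $F$. Unwinding the Rourke--Sullivan definition, for $x\in H_1(F;\Z_2)$ represented by an embedded curve $\gamma\subset\Int F$ the value $q(x)\in\Z_2$ is the obstruction to surgering $\gamma$ compatibly with the bundle data, i.e.\ the framed-bordism class in $\Omega_1^{\mathrm{fr}}\cong\Z_2$ of $\gamma$ with the stable normal framing assembled from the orientation of $F$ and the trivialization of $\nu_N|_F$ supplied by $b_N$. Because $b_N|_F=h|_F^*(\alpha|_S\oplus\xi)$ is pulled back from the contractible $C$, this trivialization is the canonical one coming from a collar of the cone point, so the push-off $\gamma^+$ of $\gamma$ off $F$ in this framing is the Seifert push-off and $q(x)=\operatorname{lk}(\gamma,\gamma^+)\bmod 2$, the reduction of the self-linking form. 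Since the Arf invariant of the mod-$2$ Seifert form obtained from any connected oriented spanning surface of $K$ in a homology $4$-ball equals $\Arf(K)$, we conclude $\Arf(q)=\Arf(K)$, which is the lemma.

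The main obstacle is the last identification of framings, to which the conclusion is genuinely sensitive: on a nonsingular $\Z_2$-quadratic space, replacing $q$ by $q+\ell$ for a nonzero linear functional $\ell$ can flip the Arf invariant. One must therefore verify that the trivialization of $\nu(F\subset W')$ induced by $b_N$ is exactly the Seifert framing rather than a twist of it, which amounts to analyzing $b_W=b_N$ near the non-locally-flat point, where $S\subset W$ is the cone on $K\subset S^3=\partial B$.
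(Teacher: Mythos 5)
Your overall strategy — reduce to the normal map $(h|_N,b_N)$, split $N$ into the locally flat part $A$ over $S\setminus\Int(C)$ and a piece $F$ over the cone, and identify the Rourke--Sullivan quadratic form restricted to $H_1(F;\Z_2)$ with the mod-$2$ Seifert data of $K$ — is close in spirit to the paper's, but you are missing the one move that makes the final identification easy, and the final identification as written has a genuine gap.

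The paper first gives $B$ the structure of a trivial $2$-disk bundle over a disk, isotopes the zero section $Z$ rel boundary to a disk $Z_0$ \emph{inside} $\partial B\cong S^3$, and then takes $\Sigma:=h^{-1}(Z_0)$. This forces the "singular piece" of $N$ to be a genuine Seifert surface of $K$ in $S^3$, and the Kervaire invariant of the restricted normal map $(h|_\Sigma, b_N|_\Sigma)$ is then literally the content of Levine's computation~\cite[Proposition 3.3]{Lev66} (stated on~\cite[p.~XXXIII]{Ran-hd-knots}). Additivity of the surgery obstruction~\cite[Theorem~III.4.14]{Bro-book} then disposes of the piece over $S\setminus B$ in one line, so there is no need to split $H_1(N;\Z_2)$ by hand. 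In your version, $F$ is a surface properly embedded in a homology $4$-ball $W'$, not a Seifert surface in $S^3$; the phrase "Seifert push-off" and the linking number $\operatorname{lk}(\gamma,\gamma^+)$ are not standard for a curve on such an $F$, and the asserted fact that "the Arf invariant of the mod-$2$ Seifert form obtained from any connected oriented spanning surface of $K$ in a homology $4$-ball equals $\Arf(K)$" is precisely the thing that needs proof, not a citable classical result. You correctly flag at the end that the framing identification is the crux and that the Arf invariant is sensitive to a change of quadratic refinement by a linear functional, but you do not close that gap. The clean way to close it is exactly the paper's isotopy of $Z$ into $\partial B$, after which one lands on Levine's theorem for honest Seifert surfaces and no framing bookkeeping in a $4$-manifold is needed. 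As it stands, your argument establishes the reduction of $V=\ker(h|_{N*})$ to $H_1(F;\Z_2)$ correctly (your Mayer--Vietoris computation is fine), but the last step — $\Arf(q)=\Arf(K)$ — is unproven.
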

\begin{proof}
Let $V$ be the smallest subcomplex that contains a neighborhood of $S$ in $W$.
Since $S$ is a full subcomplex, $V$ is a regular neighborhood of $S$ in $W$,
to which $W$ deformation retracts.
Denote the relative 
interiors of $B$, $C$, $V$ by $\mathring{B}$, $\mathring{C}$, $\mathring{V}$.
Then $V\setminus\mathring{B}$ is a trivial $2$-disk bundle over 
$S\setminus\mathring{C}$. 
Give $B$ the structure of a trivial $2$-disk bundle over a $2$-disk, 
whose zero section $Z$ intersects $\d B$ in an unknot $U$.
Glue $V\setminus\mathring{B}$ and $B$ by an orientation-preserving 
$2$-disk bundle
automorphism identifying $\d C$ with $\d Z=U$
so that the resulting $2$-disk bundle $D_\xi\to S$
has the same Euler class as $S\subset W$. 
Denote the regular neighborhoods of $K$ and $U$ in $\partial B$ by
$R_K$ and $R_U$, respectively.

Then the above-mentioned map $h\co W\to D_\xi$
can be chosen so that $h\vert_{W\setminus\mathring{V}}$
is a deformation retraction onto $\d V$, the map  
$h\vert_{V\setminus B}$ is the identity,
$h$ takes $(B, \d B, K)$ to $(B, \d B, U)$, and 
maps $R_K$ homeomorphically onto $R_U$. 
To define $h\vert_B$ apply~\cite[Proposition 1.6]{CS76} to the thickening
$B$ of $C$ and use the fact that any homology equivalence 
$(R_K, \d R_K)\to (R_U, \d R_U)$ is homotopic to a homeomorphism.

Isotope the zero section $Z$ to a $2$-disk $Z_0\subset\d B$ rel boundary, and perturb
$h$ near $B$ to be transverse regular to $Z_0$. Then $\Sigma:=h^{-1}(Z_0)$
is a Seifert surface of $K$ and $N:=(S\setminus B)\cup\Sigma$ is a closed 
surface such that $h\co N\setminus\Sigma\to S\setminus Z_0$ is the identity. 
Since the surgery obstruction is additive~\cite[Theorem III.4.14]{Bro-book},
the Kervaire invariants of the normal maps  
$(h\vert_{\Sigma}, b_N\vert_\Sigma)$ and $(h\vert_N, b_N)$ are equal.
Finally, the Kervaire invariant of
$(h\vert_{\Sigma}, b_N\vert_\Sigma)$ equals the Arf invariant 
of $K$, as stated on~\cite[page XXXIII]{Ran-hd-knots} and proved in~\cite[Proposition 3.3]{Lev66}. 
\end{proof}

\begin{proof}[Proof of Theorem~\ref{thm: arf CS}]
The above thickening $V$ of $S$ is classified by the homotopy class
of a map $f\co S\to BSRN_2$. Let $\eta\co BSRN_2\to G/PL$
be the normal invariant map, see~\cite[p.182]{CS76}.
Then $\eta\circ f$ is the normal invariant of $S\hookrightarrow V$.
Since $K$ has nonzero Arf invariant, by the above discussion
the Kervaire invariant of $\eta\circ f$ is nonzero.

It is easy to check that the thickening $V^\prime:=V\times S^1$ of 
$S^\prime:=S\times S^1$ is the pullback of $S\hookrightarrow V$
under the coordinate projection $p\co S\times S^1\to S$, see~\cite[pp.173--175]{CS76}.
Let $i\co S\to S\times S^1$ be a section of $p$, say, given by $i(m)=(m,1)$.
Since $\eta\circ f=\eta\circ f\circ p\circ i$ is homotopically nontrivial,
so is $\eta\circ f\circ p$. Hence $S^\prime\hookrightarrow V^\prime$ 
is a thickening with nontrivial normal invariant.

Arguing by contradiction suppose that $W$ has a locally flat spine $L$. 
Then $L^\prime:=L\times S^1$ is a locally flat spine of $W^\prime$.
The restriction to $L^\prime$
of the deformation retraction $W^\prime\to S^\prime$ is homotopic to a diffeomorphism 
$g\co L^\prime\to S^\prime$, see e.g.~\cite[p.5]{Lau}. 
Hence the normal invariant of $g$ is trivial. 

As we explain in~\cite[Appendix C]{Bel-rn},
the pullback via $g$
of the Poincar{\'e} embedding given by the inclusion 
$S^\prime\subset W^\prime$
is isomorphic to the Poincar{\'e} embedding of the locally flat 
inclusion $L^\prime\subset W^\prime$.
Since $\dim(S^\prime)$ is odd and $\ge 3$, 
Theorem 6.2 of~\cite{CS76} implies that the Poincar{\'e} embedding for  
$L^\prime\subset W^\prime$ can be realized by a 
locally flat embedding if and only if the normal invariants
of $g$ equals the normal invariant of
the Poincar{\'e} embedding $S^\prime\subset W^\prime$.
This is a contradiction because these normal invariants are different and 
$L^\prime\subset W^\prime$ is locally flat. 
\end{proof}

\begin{rmk}
\label{rmk: arf reverse}
The above argument can be reversed, namely, if $K$ has zero Arf invariant,
then the Poincar{\'e} embedding  induced by the
inclusion $S\hookrightarrow W$ has trivial normal invariant, 
and hence so does its product with a circle 
or more generally, with any closed manifold $L$, 
and if $\dim(L)$ is odd, then $W\times L$
has a locally flat spine~\cite[Theorem 6.2]{CS76}.
\end{rmk}

\section{Heegaard Floer $d$-invariants and $V$-functions of knots}
\label{sec: HF d-invariants}

Ozsv{\'a}th and Szab{\'o} introduced~\cite{OS03, OS5, OS04} 
Heegaard-Floer homology theories $HF^{o}(M, \t)$ 
associated with a Spin$^{c}$ structure $\t$
on a closed oriented \mbox{$3$-manifold} $M$.
Here $o$ is a decoration indicating the flavor of a 
Heegaard-Floer theory, and in this paper
$o$ is $\infty$ or $-$.
The homology groups $HF^{-}(M, \t)$ and $HF^{\infty}(M, \t)$ 
are modules over $\Z[U]$ and $\Z[U, U^{-1}]$, respectively,
where $U$ is a formal variable whose action lowers the relative homological 
degree by $2$.
Related invariants for knots and links in $3$-manifolds 
were developed in~\cite{Ras, OS3, OS08}. We refer to these papers
for background.

Henceforth, we assume that $M$ has standard $HF^{\infty}$~\cite[p.240]{OS03},
and the Spin$^{c}$ structure $\t$ is torsion, i.e.,
its first Chern class has finite order in $H^2(M)$.

According to~\cite[Section 4.2.5]{OS5} 
the group $H_1^T(M):=H_{1}(M)/\text{Tors}$ acts
on the Heegaard-Floer chain complex $CF^{o}(M, \t)$, and on the corresponding
homology group $HF^{o}(M, \t)$. 
Let $HF^{o}(M, \t)_{\mathrm{bot}}$ and $HF^{o}(M, \t)_{\mathrm{top}}$ 
denote the kernel and the cokernel
of the $H_{1}^T(M)$-action on $HF^{o}(M, \t)$.
The \emph{$d$-invariants} $d_{\mathrm{top}}(M, \t)$ and $d_{\mathrm{bot}}(M, \t)$ are the maximal homological degrees of a non-torsion class
in  $HF^{-}_{\mathrm{top}}(M, \t)$ and $HF^{-}_{\mathrm{bot}}(M, \t)$, respectively,
see~\cite[Section 9]{OS03} and ~\cite[Section 3]{LR}. 
If $M$ is a rational homology sphere, the $H_{1}^T(M)$-action is trivial, so
that $HF^{-}_{\mathrm{top}}(M, \t)=HF^{-}_{\mathrm{bot}}(M, \t)=HF^{-}(M, \t)$, and $d_{\mathrm{top}}(M, \t)=d_{\mathrm{bot}}(M, \t)$ is the usual $d$-invariant for rational homology spheres, as in~\cite{OS03}. 
The invariants $d_{\mathrm{top}}(M, \t)$, $d_{\mathrm{bot}}(M, \t)$ are
preserved under rational homology cobordisms~\cite[Proposition 4.5]{LR}.

A null-homologous knot $K$ in 
$M$ gives rise to a $\mathbb{Z}\oplus \mathbb{Z}$ 
filtered chain complex $CFK^{\infty}(M, K, \mathfrak{s})$, 
which is a $\F[U, U^{-1}]$-module, see~\cite{OS3, Ras}. 
The filtration is indexed by the pair of integers 
$(i,j)$, where $i$ keeps track of the power of $U$, and $j$ records
the so called \emph{Alexander filtration}. For $s\in \mathbb{Z}$, 
let $A^{-}_{s}(K):=A^{-}_{s}(M, K, \mathfrak{s})$ be the subcomplex of 
$CFK^{\infty}(M, K, \mathfrak{s})$ corresponding to 
$\max(i, j-s)\le 0$~\cite[Remarks 3.7--3.8]{MO-links}. 
By the large surgery formula the homology of 
$A^{-}_{s}(K)$ is the sum of one copy of $\F[U]$ and a $U$-torsion submodule.

Following~\cite{NiWu} we define the {\em $V$-function\,}  $V_{s}(K)$
of an oriented knot $K\subset S^{3}$ 
so that $-2V_{s}(K)$ 
is the maximal homological degree of the free part of $H_{\ast}(A^{-}_s(K))$.
For one-component links the $H$-function for links of~\cite{BG, Liu} is 
the $V$-function of knots. 
For example, the $V$-function of the unknot $U$ is given by
$V_{s}(U)=0$ for $s\geq 0$ and $V_{s}(U)=-s$ for $s<0$.
The $V$-function takes values in
nonnegative integers~\cite[Proposition 3.10]{BG}, and furthermore,
\cite[Proposition 3.10]{BG} and~\cite[Lemma 5.5]{Liu} give

\begin{prop}
\label{Vproperty} The $V$-function of an oriented knot
$K\subset S^{3}$ satisfies
$$V_{-s}(K)=V_{s}(K)+s\qquad\text{and}\qquad 
V_{s-1}(K)-V_{s}(K)\in \{0, 1\}.$$
\end{prop}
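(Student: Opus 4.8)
The plan is to derive both identities directly from the structure of the knot Floer complex $CFK^\infty(S^3,K)$ and the large surgery formula, following the sources cited just before the statement. First I would recall that $A^-_s(K)$ computes $HF^-$ of a large surgery on $K$ in the Spin$^c$ structure labeled by $s$, and that $H_*(A^-_s(K))$ splits as $\F[U]\oplus(\text{$U$-torsion})$, so that $V_s(K)$ is well defined by $-2V_s(K)=\deg(\text{free part})$. The symmetry $V_{-s}(K)=V_s(K)+s$ is the manifestation on $d$-invariants of the conjugation symmetry of $CFK^\infty$: the complexes $A^-_s$ and $A^-_{-s}$ are chain homotopy equivalent after an overall grading shift by $2s$ (equivalently, the two Spin$^c$ structures on the large surgery that are exchanged by conjugation have $d$-invariants differing by the standard $\frac{(2s+1)^2-\,}{4}$-type correction term; the $s$-dependent piece of that correction is what survives as the linear term $s$). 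So the first identity is obtained by quoting the conjugation symmetry of $CFK^\infty$ and bookkeeping the grading shift — this is \cite[Proposition 3.10]{BG}.

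For the stability statement $V_{s-1}(K)-V_s(K)\in\{0,1\}$, the key input is the inclusion of subcomplexes $A^-_s(K)\hookrightarrow A^-_{s-1}(K)$ together with a map going the other way (multiplication by $U$ in one of the two filtration directions), so that $V_s$ and $V_{s-1}$ are sandwiched. Concretely, I would use the two natural vertical/horizontal projection maps relating $A^-_s$, $A^-_{s-1}$, and $CF^-(S^3)=\F[U]$: the map $v_s:A^-_s\to CF^-$ and $h_s:A^-_s\to CF^-$ (the latter being $U^{s}$ times a homotopy equivalence), which on free parts act as multiplication by $U^{V_s}$ and $U^{H_s}$ respectively, where $H_s=V_{-s}$. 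Comparing $v_{s-1}$ with the composition through $A^-_s$, and using that each elementary inclusion changes the filtration level by one, forces the free parts to differ by at most one power of $U$; monotonicity $V_{s-1}\ge V_s$ comes from the direction of the inclusion. This is exactly the content reproved in \cite[Lemma 5.5]{Liu}.

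The main obstacle — really the only substantive point — is making the grading shift in the conjugation symmetry precise: one must track how the bigrading on $CFK^\infty$ transforms under the involution, how that interacts with the truncation $\max(i,j-s)\le 0$ defining $A^-_s$, and then how the large surgery formula's grading-shift term depends on $s$; getting the linear term to come out as exactly $+s$ (rather than, say, $+s$ up to a constant that must then be pinned down using the unknot normalization $V_s(U)=\max(-s,0)$) requires care. Everything else is formal manipulation of short exact sequences of $U$-torsion $\F[U]$-modules, so I would state the two identities as consequences of the cited propositions and only spell out the grading computation if a self-contained treatment is wanted; since Proposition~\ref{Vproperty} is attributed to \cite[Proposition 3.10]{BG} and \cite[Lemma 5.5]{Liu}, the cleanest exposition is simply to invoke those results.
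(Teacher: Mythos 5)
Your proposal ends up exactly where the paper does: the statement is proved by citing \cite[Proposition 3.10]{BG} for the symmetry $V_{-s}=V_s+s$ and \cite[Lemma 5.5]{Liu} for the step-size bound, and your supplementary sketch of those arguments (conjugation symmetry of $CFK^\infty$ with a grading shift; the $v_s$, $h_s$ maps and elementary inclusions between the $A^-_s$ complexes) matches the standard proofs. One small slip in the sketch: with the convention $A^-_s=\{\max(i,j-s)\le 0\}$ the inclusion of subcomplexes goes $A^-_{s-1}\hookrightarrow A^-_s$, not the other way around, though this does not affect the conclusion.
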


\begin{ex}
\label{ex:alternating}
Let $K$ be an alternating knot of signature $\sigma$; recall that $\s\in 2\mathbb Z$. 
By~\cite[Theorem 1.7]{HM} if $\sigma>0$, then $V_{s}(K)=0$ for all $s$, and
if $\sigma\leq 0$,  the values $V_{0}(K)$ are given in the table below:

\begin{center}
\begin{tabular}{ |c|c|c| } 
 \hline
 $\sigma$   & $V_{0}(K)$\\ \hline 
$-8k$   & $2k$ \\ \hline
$-8k-2$  & $2k+1$ \\ \hline
$-8k-4$ &  $2k+1$ \\ \hline
$-8k-6$  & $2k+2$ \\ 
 \hline
\end{tabular}
\end{center}
\end{ex}

\section{Surgeries on knots and $d$-invariants}
\label{sec: knots and d-invariants}

For a positive integer $g$ let 
$C^g:=\#^{2g} S^{2}\times S^{1}$, the connected sum of $2g$ copies of $S^2\times S^1$.
As usual $M_n(K)$ denotes the $n$-framed surgery on a closed oriented $3$-manifold $M$
along a knot $K\subset M$; in what follows $M$ is $S^3$ or $C^g$.
 
If $B\subset C^g$ is the Borromean knot, as defined e.g. in~\cite[Figure 4.1]{Park}, then
$C^g_n(B)$ has the structure of an 
oriented circle bundle over the genus $g$ oriented surface with 
Euler number $n$~\cite[Section 5.2]{OS4}.
For the unknot $U\subset S^3$ it is well-known that $S^3_n(U)$ is 
an  oriented circle bundle over $S^2$ with Euler number $n$.

It follows from~\cite[Propositions 9.3--9.4]{OS03}, 
cf.~\cite[Proposition 4.0.5]{Park}, that 
$C^g_n(K\# B)$ has standard $HF^\infty$ for any oriented knot 
$K\subset S^3$. 
The same is true for $S^3_n(K)$~\cite[Theorem 10.1]{OS04}. 
Thus the $d$-invariants $d_{\mathrm{top}}$, $d_{\mathrm{bot}}$
are defined for $C^g_n(K\# B)$ and $S^3_n(K)$, and moreover, 
for $S^3_n(K)$ they reduce to the usual \mbox{$d$-invariants}.
They were computed by Ni-Wu~\cite[Proposition 1.6]{NiWu} for $S^3_n(K)$, and
by Park~\cite[Theorem 4.2.3]{Park} for $C^g_n(B)$, $n\neq 0$.
Park's argument extends to $C^g_n(K\# B)$ as follows.

\begin{thm}
\label{thm:surgeryfor}
For $n>0$, we have
\begin{equation}
d_{\mathrm{top}}(C^g_{n}(K\#B), k))=
g+\dfrac{(2k-n)^{2}-n}{4n}-2\min_{_{a=0, \cdots, g}} 
\!\!\{a+V_{k-g+2a}(K)\}.
\end{equation}
\begin{equation}
d_{\mathrm{bot}}(C^g_{n}(K\# B, k))= 
g +\dfrac{(2k-n)^{2}-n}{4n}-2\max_{_{a=0, \cdots, g}}\!\!\{a+V_{k-g+2a}(K)\}
\end{equation}
where $k$ labels the torsion Spin$^{c}$ structures on $C^g_{n}(K\# B)$ 
with $-n/2< k \leq n/2$. 
The $d$-invariant of $S^3_n(K)$ is given by
\begin{equation}
d(S^{3}_{n}(K), k)=\dfrac{(2k-n)^{2}-n}{4n}-2V_{k}(K).
\end{equation}
\end{thm}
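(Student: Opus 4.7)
The third formula is \cite[Proposition 1.6]{NiWu}, so the work lies in the first two, which extend Park's computation \cite[Theorem 4.2.3]{Park} from the case $K=U$ to a general knot. The plan is to combine the connected sum formula for $CFK^\infty$, Park's explicit filtered model for $CFK^\infty(C^g, B)$, and the large-surgery formula for $HF^-$, while tracking the $H_1^T$-action.

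First I would use the Ozsv\'ath--Szab\'o connected sum formula \cite[Theorem 7.1]{OS3} to identify
\[
CFK^\infty(C^g, K\# B) \simeq CFK^\infty(S^3, K)\otimes_{\F[U,U^{-1}]} CFK^\infty(C^g, B)
\]
as $\Z\oplus\Z$-filtered chain complexes with the sum filtration, and hence the subcomplex $A^-_s(K\#B)$ up to quasi-isomorphism with the truncation $\max(i_1+i_2,\, j_1+j_2-s)\le 0$ on the tensor product. Next I would recall Park's explicit model for $CFK^\infty(C^g, B)$, built on \cite[Section 9]{OS5}, in which the relevant $(g+1)$-step staircase contributes $\F[U]$-summands at Maslov degrees $-2a$, $a=0,\ldots,g$, with Alexander shift $-g+2a$. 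Tensoring with $CFK^\infty(S^3,K)$ and computing the homology of the truncation yields
\[
H_*(A^-_s(K\# B)) \;\cong\; \bigoplus_{a=0}^{g} \F[U]_{(-2a-2V_{s-g+2a}(K))} \;\oplus\; (U\text{-torsion}),
\]
where the subscript records the Maslov degree of the top of the tower. The $H_1^T$-action is inherited from the Borromean factor (the $K$ factor lives in $S^3$ and contributes no $H_1^T$); as in Park it permutes these $g+1$ towers, so its kernel is supported on the tower of highest Maslov degree and its cokernel on the tower of lowest Maslov degree.

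Finally I would apply the large-surgery formula \cite[Theorem 10.1]{OS04}, adapted to $C^g$ as in \cite[Section 4]{Park}, to identify $HF^-(C^g_n(K\# B),k)$ with $H_*(A^-_s(K\#B))$ for the appropriate shift, up to the overall grading correction $g+\frac{(2k-n)^2-n}{4n}$; for smaller $n>0$ the full mapping-cone formula gives the same answer because the monotonicity of the $V$-function (Proposition~\ref{Vproperty}) prevents the truncated contributions from affecting the extremal free degrees. Reading off $d_{\mathrm{top}}$ and $d_{\mathrm{bot}}$ from the highest non-torsion degrees in the $H_1^T$-cokernel and kernel then produces, respectively, the $\min$ and $\max$ in the two stated formulas. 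The main obstacle is the second step: one must carefully verify that the tensor product decomposition is compatible with the $A^-_s$ truncation, so that the $V$-values of $K$ distribute cleanly across Park's $(g+1)$-fold staircase with the correct Maslov gradings; once this is in place, the remaining steps are formal adaptations of \cite{Park, NiWu, OS04}.
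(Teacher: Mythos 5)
Your proposal takes essentially the same route as the paper's terse proof, which (i) identifies the free part of $HF^-(C^g_n(K\#B),k)$ with the free part of $H_*(A^-_k(K\#B))$ via the diagram chase in the surgery mapping cone formula, following Park's Theorem~4.1.1, and (ii) cites BHL Theorem~6.10 for the grading of that free part; your second and third steps simply unpack the BHL input via the connected sum formula and Park's model for the Borromean complex, which is a reasonable reconstruction of that ingredient. One slip to correct in your middle step: the kernel of the $H_1^T$-action sits on the tower of \emph{lowest} Maslov degree (the action lowers degree, so that tower has nowhere to map) and the cokernel picks up the tower of \emph{highest} Maslov degree, not the other way around as you wrote; this is precisely what makes $d_{\mathrm{top}}$ come from $\min_a\{a+V_{k-g+2a}(K)\}$ and $d_{\mathrm{bot}}$ from $\max_a\{a+V_{k-g+2a}(K)\}$, which you state correctly in your conclusion, so this is a local transposition rather than a conceptual gap. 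Also, ``permutes'' is not quite the right word for the $H_1^T$-action on the towers, and your treatment of small $n>0$ via large surgery plus monotonicity is a heuristic for what Park's mapping-cone diagram chase establishes directly, which is the argument the paper actually invokes.
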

\begin{proof}
As in the proof of~\cite[Theorem 4.1.1]{Park},
a diagram chase in the surgery mapping cone formula~\cite[Theorem 4.10]{OS4}
shows that the free part of  $H_{\ast}(A^{-}_{k})$ is isomorphic 
to the free part of $HF^{-}(C^g_{n}(K\# B), k)$.
The grading of the free part of $H_{\ast}(A^{-}_{k})$ can be found 
in~\cite[Theorem 6.10]{BHL}.
\end{proof}

\begin{rmk} 
Theorem~\ref{thm:surgeryfor} extends to $n<0$ as follows.
Since the Borromean knot is amphichiral, $C^g_{n}(K\# B)=-C^g_{-n}(\bar{K}\# B)$,
where $\bar{K}$ is the  mirror of $K$. Then~\cite[Proposition 3.7]{LR} gives 
\begin{align*}
d_{\mathrm{bot}}(C^g_{n}(K \# B), k)&=-d_{\mathrm{top}}(C^g_{-n}(\bar{K}\# B), k)
\\ 
d_{\mathrm{top}}(C^g_{n}(K \# B), k)&=-d_{\mathrm{bot}}(C^g_{-n}(\bar{K} \# B), k).
\end{align*}
\end{rmk}

\begin{rmk}
A similar argument also computes 
$d_{\mathrm{top}}$ and $d_{\mathrm{bot}}$ for rational surgeries, 
i.e., when $0\neq n\in\mathbb Q$. 
\end{rmk}

\section{Spines, homology cobordisms, and $d$-invariants}
\label{sec: spines homology cobordisms}

Let $W$ be a compact, oriented, smooth $4$-manifold with a {\scshape pl} spine $S_1$,
an oriented genus $g$ surface with normal Euler number $e$. 
As before assume that $S_1$ has at most one 
non-locally-flat point with singularity knot $K\subset S^3$. 
If $W$ also has a smooth spine $S_{2}$, then there is a homology cobordism  
$C$ between the boundaries $M_1$, $M_2$ of the regular neighborhoods of $S_{1}$, 
$S_{2}$. Namely, $C$ is obtained by removing the interiors
of the regular neighborhoods from $W$ and gluing the results along $\d W$. 

Here $M_{1}$ can be described as 
an $e$-surgery on $C^g=\#^{2g} S^{2}\times S^{1}$ along the knot $K\# B$ where 
$B$ is the Borromean knot~\cite[Theorem 3.1]{BHL}, while $M_{2}$ 
is the circle bundle over $S_{2}$ with Euler number $e$, 
which is the $e$-surgery on $C^g$ along $B$. 

Since $H^{2}(C)\cong\mathbb{Z}^{2g}\oplus\mathbb{Z}/e\mathbb Z$,
every torsion Spin$^{c}$ structure on $C$ can be thought of 
an element of $\mathbb{Z}/e\mathbb Z$ indexed by $k\in (-e/2, e/2]$.
Restricting the element to $M_j$, $j\in\{1, 2\}$, 
gives a torsion Spin$^{c}$ structure on $M_j$, which we denote $\t_{kj}$.
Thus
\begin{equation}
\label{form: dtop}
d_{\mathrm{top}}(M_{1}, \mathfrak{t}_{k1})= d_{\mathrm{top}}(M_{2}, \mathfrak{t}_{k2}). 
\end{equation}

\begin{thm}
\label{thm: smooth spine g over 2}
If $e\geq 0$, and $W$ contains a smooth spine, then the singularity knot $K$ satisfies
\begin{equation}
\label{restriction}
  \min_{_{a=0, \cdots, g}}\{a+ V_{-g+2a}(K)\}=\lceil g/2 \rceil,
\end{equation}
where $\lceil g/2 \rceil$ is the smallest integer that is $\ge g/2$.
\end{thm}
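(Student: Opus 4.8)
The plan is to compare the two $d_{\mathrm{top}}$-invariants appearing in equation~\eqref{form: dtop} and extract~\eqref{restriction} by taking the right Spin$^c$ structure, namely $k=0$. On the $M_1$-side we have $M_1 = C^g_e(K\#B)$, so Theorem~\ref{thm:surgeryfor} gives
\[
d_{\mathrm{top}}(M_1,\t_{01}) = g + \frac{-e}{4} - 2\min_{a=0,\dots,g}\{a + V_{-g+2a}(K)\}
\]
(using $k=0$, $n=e$, provided $e>0$; the case $e=0$ will need a separate remark, see below). On the $M_2$-side, $S_2$ is a \emph{smoothly} embedded genus $g$ surface, so the singularity knot of $S_2$ is the unknot, and $M_2 = C^g_e(B) = C^g_e(U\#B)$. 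Applying Theorem~\ref{thm:surgeryfor} with $K$ replaced by the unknot $U$ and using $V_s(U)=0$ for $s\ge 0$, $V_s(U)=-s$ for $s<0$, we get
\[
d_{\mathrm{top}}(M_2,\t_{02}) = g + \frac{-e}{4} - 2\min_{a=0,\dots,g}\{a + V_{-g+2a}(U)\}.
\]
Equating the two via~\eqref{form: dtop} cancels the $g$ and the $\frac{-e}{4}$ terms, leaving
\[
\min_{a=0,\dots,g}\{a + V_{-g+2a}(K)\} = \min_{a=0,\dots,g}\{a + V_{-g+2a}(U)\}.
\]

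The remaining task is the purely combinatorial computation of the right-hand side: I claim $\min_{a}\{a + V_{-g+2a}(U)\} = \lceil g/2\rceil$. For the indices with $-g+2a \ge 0$, i.e.\ $a \ge g/2$, we have $V_{-g+2a}(U)=0$, so the term is just $a$, minimized at $a = \lceil g/2\rceil$, giving the value $\lceil g/2\rceil$. For indices with $-g+2a<0$, i.e.\ $a<g/2$, we have $V_{-g+2a}(U) = g-2a$, so the term equals $a + g - 2a = g-a$, which over $a<g/2$ is minimized by taking $a$ as large as possible, i.e.\ $a = \lceil g/2\rceil - 1$, giving $g - \lceil g/2\rceil + 1 = \lfloor g/2\rfloor + 1 \ge \lceil g/2\rceil$. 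So the global minimum is $\lceil g/2\rceil$, as claimed, and the theorem follows.

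The genuinely delicate points, rather than the arithmetic, are the setup identifications that let me invoke Theorem~\ref{thm:surgeryfor}: first, that $M_1$ really is the surgery $C^g_e(K\#B)$ with the stated framing (this is the content of the cited~\cite[Theorem 3.1]{BHL}, together with the normalization of the Euler number/framing convention), and second, that the Spin$^c$ structure $\t_{k1}$ on $M_1$ obtained by restricting the index-$k$ torsion class on the homology cobordism $C$ matches the index-$k$ Spin$^c$ structure in Theorem~\ref{thm:surgeryfor}, and similarly for $\t_{k2}$ on $M_2$ — i.e.\ that the labelings are compatible under the cobordism, so that the equality~\eqref{form: dtop} is between the \emph{same} $k$ on both sides. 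I expect this bookkeeping about Spin$^c$ structures to be the main obstacle; once it is in place, and once $e>0$ is granted (handling $e=0$ by the separate rational-surgery remark or by a limiting argument noted after Theorem~\ref{thm:surgeryfor}), the proof is the two displayed equations above followed by the unknot computation. I should also record that for $e>0$ the Spin$^c$ structure indexed by $k=0$ lies in the allowed range $-e/2 < 0 \le e/2$, so Theorem~\ref{thm:surgeryfor} does apply to it.
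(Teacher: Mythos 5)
Your argument for $e>0$ is correct and is essentially the paper's argument: specialize the homology-cobordism equality~\eqref{form: dtop} to $k=0$, substitute the $d_{\mathrm{top}}$ formula from Theorem~\ref{thm:surgeryfor} on both sides, cancel, and compute $\min_a\{a+V_{-g+2a}(U)\}=\lceil g/2\rceil$. (There is a small arithmetic slip --- at $k=0$ the common term is $\frac{e-1}{4}$, not $\frac{-e}{4}$ --- but since it cancels it does not matter.)

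The genuine gap is the $e=0$ case, which you flag but do not actually handle, and the devices you gesture at would not close it. There is no ``limiting argument noted after Theorem~\ref{thm:surgeryfor},'' and the rational-surgery remark (Remark after the theorem) explicitly requires $0\neq n\in\mathbb Q$; neither addresses $n=0$, which is genuinely different because $C^g_0(K\#B)$ has $b_1=2g+1$ and falls outside the hypotheses of Theorem~\ref{thm:surgeryfor}. The paper's proof handles $e=0$ by a separate step: it invokes (a generalization to 3-manifolds with standard $HF^\infty$ and trivial $HF_{\mathrm{red}}$ of) \cite[Corollary 9.14]{OS03} to relate $d_{\mathrm{top}}$ of the $0$-surgery $M_j$ to $d_{\mathrm{top}}$ of the $1$-surgery $M'_j$ via $d_{\mathrm{top}}(M_j,\t_{0j})-\tfrac12=d_{\mathrm{top}}(M'_j,\t'_{0j})$; the homology cobordism equality then transfers to the $1$-surgeries, for which Theorem~\ref{thm:surgeryfor} applies and the $e>0$ argument goes through. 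Without this (or some substitute) your proof only establishes the theorem for $e>0$.

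Your ``delicate points'' paragraph correctly identifies the two setup facts needed --- that $M_1\cong C^g_e(K\#B)$ with the right framing (from \cite[Theorem 3.1]{BHL}) and the compatibility of the torsion Spin$^c$ labelings across the homology cobordism --- but these are invoked, not re-proved, in the paper as well, so they are not gaps in either argument.
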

\begin{proof}
Since $V_{s}(U)=\dfrac{|s|-s}{2}$ we compute 
\begin{equation}
\label{form: unknot}
\min_{_{a=0, \cdots, g}}\!\!\{a+V_{-g+2a}(U)\}=\lceil g/2 \rceil.
\end{equation}
If $e>0$, by Theorem \ref{thm:surgeryfor}
$$d_{\mathrm{top}}(M_{1}, \t_{k1})=g+s-2\min_{_{a=0, \cdots, g}}\!\!\{a+V_{k-g+2a}(K)\}$$
 and
$$d_{\mathrm{top}}(M_{2}, \t_{k2})=g+s-2\min_{_{a=0, \cdots, g}}\!\!\{a+V_{k-g+2a}(U)\}$$
where $s=\frac{(2k-e)^{2}-e}{4e}$.
Hence 
\begin{equation}
\label{form: min}
\min_{_{a=0, \cdots, g}}\!\!\{a+V_{k-g+2a}(K)\}=
\min_{_{a=0, \cdots, g}}\!\! \{a+V_{k-g+2a}(U)\}.
\end{equation}
Combining (\ref{form: unknot}) and (\ref{form: min}) for $k=0$ gives (\ref{restriction})
in the case $e>0$. 

Assume $e=0$. Then $M_{j}$ is the $0$-surgery on $C^g$, where $j=1, 2$. 
Let $M'_{j}$ denote the $1$-surgery on $C^g$ along the same knot as for $M_{j}$. 
By the equality part of~\cite[Corollary 9.14]{OS03}, 
$$d_{\mathrm{top}}(M_{j}, \t_{0j})-\dfrac{1}{2}=d_{\mathrm{top}}(M'_{j}, \t'_{0j}),$$
where $\t_{0j}, \t_{0j}^\prime$ are the trivial Spin$^{c}$ structures. 
Even though~\cite[Corollary 9.14]{OS03} is stated for knots in $S^3$, it 
generalizes (with the same proof)
to knots in $3$-manifolds with standard $HF^\infty$ and trivial $HF_{\mathrm{red}}$,
which is how we apply it.

By (\ref{form: dtop}) $M_1$, $M_2$ have the same $d_{\mathrm{top}}$, and hence 
$d_{\mathrm{top}}(M'_{1}, \t'_{01})=d_{\mathrm{top}}(M'_{2}, \t'_{02})$,
and as before (\ref{form: unknot})--(\ref{form: min}) imply 
(\ref{restriction}), now for $e=0$.
\end{proof}

\begin{cor}
\label{cor: 0 or 1}
If $W$ contains a smooth spine with normal
Euler number $e\geq 0$, then the singularity  knot $K$ satisfies
\begin{equation}
\label{restriction2}
V_{0}(K)=0 \textup{\ if $g$ is even and}\  V_{1}(K)=0 \textup{ if $g$ is odd.}
\end{equation}
\end{cor}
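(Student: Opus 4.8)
The plan is to combine the conclusion of Theorem~\ref{thm: smooth spine g over 2} with the two elementary identities of Proposition~\ref{Vproperty}. Write $g=2m$ when $g$ is even and $g=2m+1$ when $g$ is odd, and for $a\in\{0,\dots,g\}$ put $T(a):=a+V_{-g+2a}(K)$, so that Theorem~\ref{thm: smooth spine g over 2} (applicable since $e\ge 0$) asserts $\min_{a}T(a)=\lceil g/2\rceil$. First I would record a pointwise lower bound valid for every knot: if $-g+2a\ge 0$ then $T(a)\ge a\ge\lceil g/2\rceil$, while if $-g+2a<0$ then the symmetry $V_{-s}(K)=V_{s}(K)+s$ together with $V_{s}(K)\ge 0$ gives $V_{-g+2a}(K)=V_{g-2a}(K)+(g-2a)\ge g-2a$, so $T(a)\ge g-a$, and $a<g/2$ forces $g-a\ge\lceil g/2\rceil$. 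Hence $\min_a T(a)\ge\lceil g/2\rceil$ unconditionally, and the real content of Theorem~\ref{thm: smooth spine g over 2} is that this bound is sharp; the point of the proof is to identify which indices $a$ can make $T(a)=\lceil g/2\rceil$.

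Next comes the parity bookkeeping. If $g=2m$, the estimate above gives $T(a)\ge m+1$ both for $a\le m-1$ (where $T(a)\ge g-a$) and for $a\ge m+1$ (where $T(a)\ge a$), so the minimum $m$ can only be realized at $a=m$, where $T(m)=m+V_0(K)$; equating $T(m)=\lceil g/2\rceil=m$ yields $V_0(K)=0$. If $g=2m+1$, the same estimate gives $T(a)\ge m+2$ for $a\le m-1$ and for $a\ge m+2$, so the minimum $m+1$ is realized at $a=m$ or $a=m+1$; at $a=m+1$ one has $T(m+1)=(m+1)+V_1(K)$, and at $a=m$, using the other half of Proposition~\ref{Vproperty} in the form $V_{-1}(K)=V_1(K)+1$, one has $T(m)=m+V_{-1}(K)=(m+1)+V_1(K)$ as well, so in either case $T=\lceil g/2\rceil=m+1$ forces $V_1(K)=0$. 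This is exactly~(\ref{restriction2}).

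I do not anticipate a genuine obstacle: the argument uses only the two relations of Proposition~\ref{Vproperty} and the integrality and nonnegativity of the $V$-function, and the sole delicate point is keeping the strict versus non-strict inequalities straight so that the correct index $a$ is assigned to the correct bucket for each parity of $g$ --- in particular that for even $g$ no index other than $a=g/2$ can attain $\lceil g/2\rceil$, whereas for odd $g$ the two central indices $a=m$ and $a=m+1$ contribute the same value. (One sees structurally why the minimum sits at the center by noting $T(a)=T(g-a)$, which follows from the symmetry relation, but the bucket estimate above already suffices and is shorter.)
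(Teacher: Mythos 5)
Your proof is correct and takes essentially the same approach as the paper: both combine Theorem~\ref{thm: smooth spine g over 2} with the nonnegativity and the symmetry relation $V_{-s}=V_s+s$ from Proposition~\ref{Vproperty} to pin the minimizing index to the center (with a slight care needed for the two central indices when $g$ is odd). Your write-up is in fact a bit more explicit than the paper's about exactly which indices can attain the value $\lceil g/2\rceil$ and why, which is a welcome clarification.
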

\begin{proof}
If $g=2k$, then by Theorem~\ref{thm: smooth spine g over 2} 
$$\min_k\{ V_{0}(K)+k, V_{2}(K)+k+1, \cdots, V_{2k}(K)+2k\}=k.$$
Proposition~\ref{Vproperty} gives $V_{s-1}(K)\le V_s(K)+1$, and  
hence the minimum occurs for $V_{0}(K)+k=k$, 
which implies $V_{0}(K)=0$. Similarly, if $g=2k+1$, 
we have 
$$\min_k \{ V_{1}(k)+k+1, \cdots, V_{2k+1}(K)+2k+1\}=k+1$$
which means that $V_{1}(K)+k+1=k+1$, and hence $V_{1}(K)=0$.
\end{proof}

\section{Singularity knots and smooth spines}
\label{sec: sing knots and spines}

As in Section~\ref{sec: spines homology cobordisms}
let $W$ a compact, oriented, smooth $4$-manifold with a {\scshape pl} spine which is
an oriented genus $g$ surface with normal Euler number $e$,
and at most one non-locally-flat point with singularity knot $K$. 
After changing the orientation of $W$, if needed, 
we can and will assume that $e\ge 0$.

\begin{proof}[Proof of Theorem~\ref{thm: sing knots no smooth spine}]
By Corollary~\ref{cor: 0 or 1} $V_{0}(K)=0$ or $V_{1}(K)=0$ depending on the parity of $g$,
and hence $g(K)\le 1$~\cite[Lemma 2.11]{Liu1}, where $g(K)$ is the genus of 
of $K$. If $g(K)=0$, then $K$ is the unknot. A genus-one L-space knot is 
the right-handed trefoil~\cite[Corollary 1.5]{Ghi}. 
According to~\cite{NNU} the Arf invariant for the torus knot $T(p,q)$ is 
$(p^{2}-1)(q^{2}-1)/24\ (\textup{mod}\,2)$.
Thus the Arf invariant of $T(2, 3)$ is nonzero, which implies by 
Theorem~\ref{thm: arf CS} that $W$ cannot contain a smooth spine.  
This completes the proof when $K$ is an L-space knot. 

Suppose $K$ is an alternating knot of signature $<-4$. 
Hence $V_{0}(K)\geq 2$ by Example \ref{ex:alternating}. 
Then Proposition~\ref{Vproperty} gives $V_{1}(K)\geq 1$, which 
by Corollary~\ref{cor: 0 or 1} shows that $W$ does not have a smooth spine. 

Finally, suppose that $K$ is the connected sum of 
nontrivial L-space knots $K_1, \cdots, K_n$ with $n\ge 2$. 
Thus $g(K)=g(K_1)+\cdots+g(K_{n})$.
Since $K_i$ is nontrivial, $g(K_i)\ge 1$, and hence $g(K)\ge n$.
For $j\in\mathbb Z$ set $R_{K_i}(j):=V_{g(K_i)-j}(K_i)$ and
$$R_{K}(j):=\min_{_{j_{1}+\cdots +j_{n}=j}} R_{K_{1}}(j_{1})+\cdots +R_{K_{n}}(j_{n}).$$
By Proposition~\ref{Vproperty} the function $R_{K_{i}}$ is nonnegative and
nondecreasing, and combining the proposition with~\cite[Lemma 2.11]{Liu1} gives 
$R_{K_{i}}(1)=V_{g(K_{i})-1}(K_i)=1$. Hence $R_{K_{i}}(j)\ge 1$ for every $j\ge 1$.

Propositions 5.1 and 5.6 and Lemma 6.2 of~\cite{BorLiv} imply 
$V_{j}(K)+j=R_{K}(g(K)+j)$; the notations in ~\cite{BorLiv} are different. 
Again, by Corollary~\ref{cor: 0 or 1} 
if $V_{0}(K)$ and $V_{1}(K)$ are both nonzero,
then $W$ does not have a smooth spine.

To see that $V_{0}(K)=R_{K}(g(K))\geq 1$ assume the minimum of $R_{K}(g(K))$
is attained for $j_1+\dots +j_n=g(K)$. Then $j_i\ge 1$ for some $i$,
and $R_{K}(g(K))\ge R_{K_i}(j_i)\ge 1$.

To show that $1\le V_{1}(K)=R_{K}(g(K)+1)-1$
assume that the minimum of $R_{K}(g(K)+1)$ is attained for
$j_{1}+\cdots+j_{n}=g(K)+1$. If $j_{i}\geq g(K_{i})+2$, 
then $$R_{K}(g(K)+1)\ge R_{K_{i}}(j_{i})\geq 
R_{K_{i}}(g(K_{i})+2)=V_{-2}(K_i)=V_2(K_i)+2\ge 2$$
as claimed.
Otherwise, there are indices with 
$j_{i}\geq g(K_{i})$ and $j_{l}=g(K_{l})+1$. 
Then $R_{K_{i}}(j_{i})\ge V_{0}(K_{i})\geq 1$ and $R_{K_{l}}(j_{l})=V_{1}(K_{l})+1\geq 1$, and hence $R_{K}(g(K)+1)\geq 2$ as desired. 
\end{proof}

\begin{proof}[Proof of Corollary~\ref{cor: top flat}]
For any $m\geq 2$, there is a topologically slice knot $K_{m}$ with 
$V_{0}(K_{m})=m$~\cite[Proposition 6 and Theorem B.1]{HKL}. 
The corresponding manifold $W$ has a  topologically flat spine.
By Corollary~\ref{cor: 0 or 1} and Proposition~\ref{Vproperty} if $W$ has
a smooth spine, then $V_0(K)\in\{0,1\}$. 
\end{proof}

\small
\bibliographystyle{amsalpha}
\bibliography{spine-8-15-2021}

\end{document}